\theoremstyle{definition}
\newtheorem{theorem}{Theorem}
\newtheorem{defi}[theorem]{Definition}
\newtheorem*{rmk}{Remark}
\newtheorem{lem}[theorem]{Lemma}
\numberwithin{theorem}{section}  
\numberwithin{equation}{section} 
\newtheorem*{asm}{Assumption}
\newcommand{\Pro}{{\mathbf{P}}}
\newcommand{\Pas}{{\Pro\text{-a.s.}}}
\begin{document}

\title{Global solution for the stochastic nonlinear Schr$\ddot{\text{o}}$dinger system with quadratic interaction in four dimensions\footnote{ASM Subject Classifications: 60H15, 35B65, 35J10 \\ keywords: Stochastic nonlinear Schr\"odinger equation; Ground state}}
 
\author{
{Masaru Hamano\footnote{Faculty of Science and Engineering, Waseda University, Tokyo 169-8555, Japan, email: m.hamano3@kurenai.waseda.jp}} \and {Shunya Hashimoto\footnote{Department of Mathematics, Faculty of Science, Saitama University, Saitama 338-8570, Japan, email: s.hashimoto.230@ms.saitama-u.ac.jp}}  \and {Shuji Machihara\footnote{Department of Mathematics, Faculty of Science, Saitama University, Saitama 338-8570, Japan, email: machihara@rimath.saitama-u.ac.jp}}
}

\date{}

\maketitle

\begin{abstract}
We discuss the global existence of solutions to a system of stochastic Schr\"odinger equations with multiplicative noise.
Our setting of the quadratic nonlinear terms in dimension 4 is $L^2$-critical.
We treat the solutions under the ground state.
We estimate the time derivative of the quantity of energy by using the cancellation of the cubic terms in the spatial derivative of the solution.
\end{abstract}

\section{Introduction}
We consider the Cauchy problem for the stochastic nonlinear Schr\"odinger system (SNLSS) with multiplicative noise:
\begin{eqnarray}
\label{SNLSS}
\begin{cases}
du(t,\xi)=i\Delta u(t,\xi)dt+2iv(t,\xi)\overline{u(t,\xi)}dt \\
\hspace{5em} -\mu(\xi)u(t,\xi)dt+u(t,\xi)dW(t,\xi), \quad t\in(0,T), \ \xi\in \mathbb{R}^d, \\
dv(t,\xi)=\frac{1}{2}i\Delta v(t,\xi)dt+iu^2(t,\xi)dt \\
\hspace{5em} -\widetilde{\mu}(\xi)v(t,\xi)dt+v(t,\xi)d\widetilde{W}(t,\xi), \quad t\in(0,T), \ \xi\in \mathbb{R}^d, \\
u(0,\xi)=u_0(\xi), \quad v(0,\xi)=v_0(\xi), \quad \xi\in\mathbb{R}^d.
\end{cases}
\end{eqnarray}
The Wiener processes $W(t,\xi), \widetilde{W}(t,\xi)$ and the functions $\mu, \widetilde{\mu}$ are given by
\begin{align*}
W(t,\xi)=&\sum_{k=1}^Ni\phi_k(\xi)B_k(t), \quad \widetilde{W}(t,\xi)=2W(t,\xi), \\
&\mu=\frac{1}{2}\sum_{k=1}^N\phi_k^2, \quad \widetilde{\mu}=4\mu,
\end{align*}
where, $\phi_k\in C^{\infty}_b(\mathbb{R}^d,\mathbb{R})$ and the $B_k(t)$ are real-valued independent Brownian motions with respect to a probability space $(\Omega,\mathcal{F},\Pro)$ with natural filtration $(\mathcal{F}_t)_{t\ge0}, \ 1\le k\le N$.
In this paper, we assume $N<\infty$ which is the same setting with the papers \cite{BRZ14,BRZ16}. Our techniques easily go over to the case where $N=+\infty$ (i.e. infinite dimensional noise), we refer to the paper \cite[Remark 2.3.13]{Z14} for this extension. 

In the deterministic case (i.e., $W=0$), \eqref{SNLSS} is the following equation.
\begin{eqnarray}
\label{NLSS}
\begin{cases}
du(t,\xi)=i\Delta u(t,\xi)dt+2iv(t,\xi)\overline{u(t,\xi)}dt \quad t\in(0,T), \ \xi\in \mathbb{R}^d, \\
dv(t,\xi)=\frac{1}{2}i\Delta v(t,\xi)dt+iu^2(t,\xi)dt \quad t\in(0,T), \ \xi\in \mathbb{R}^d, \\
u(0,\xi)=u_0(\xi), \quad v(0,\xi)=v_0(\xi), \quad \xi\in\mathbb{R}^d.
\end{cases}
\end{eqnarray}
We introduce the conservation laws related to \eqref{NLSS} as
\begin{align}
\label{mass}
M(u,v)(t)&:=\|u\|^2_{L^2}+2\|v\|^2_{L^2}=M(u_0,v_0), \\
\label{enecon}
E(u,v)(t)&:=K(u,v)-2P(u,v)=E(u_0.v_0),
\end{align}
where
\begin{align}
\label{K}
K(u,v)(t)&:=\|\nabla u\|^2_{L^2}+\frac{1}{2}\|\nabla v\|^2_{L^2}, \\
\label{P}
P(u,v)(t)&:=\text{Re}\langle v,u^2\rangle,
\end{align}
with
\[ \langle f,g\rangle:=\int_{\mathbb{R}^d}f(x)\overline{g(x)}dx. \]
N. Hayashi, T. Ozawa, and K. Tanaka showed the local well-posedness of \eqref{NLSS} in \cite{HOT13}. They also showed that if the mass of the initial data is less than the mass of the ground state $(\phi,\psi)$, then the solution exists globally, where $(\phi,\psi)\in H^1\times H^1$ is a nontrivial solution of the elliptic equation
\begin{align}
\label{ground}
\begin{cases}
-\Delta \phi+\phi=2\psi\phi, \\
-\frac{1}{2}\Delta \psi+2\psi=\phi^2,
\end{cases}
\end{align}
and is given as attaining the infimum of 
\begin{align*}
I(\phi,\psi)=\frac{1}{2}M(\phi,\psi)+\frac{1}{2}E(\phi,\psi),
\end{align*}
with a critical point of $I$.

Next, we introduce the results for the stochastic case. For the single stochastic Schr\"odinger equation, global and blow-up solutions have been studied in \cite{BRZ14,BRZ16,BRZ17,BD99,BD03}. In particular, \cite{BRZ14,BRZ16,BRZ17} is used a rescaling transformation introduced in \cite{BDR09} under the additional assumptions on the noise coefficients.
This transformation reduces the stochastic Schr\"odinger equation to a random Schr\"odinger equation which is independent of noise. 

Also, in the case of stochastic Schr\"odinger systems, Y. Chen, J. Duan, and Q. Zhang \cite{CDZ20} studied the stochastic nonlinear Schr\"odinger system with usual power-type nonlinear terms with exponent $2\sigma+1$, namely, 
\begin{align*}
\begin{cases}
idu+(\Delta u+(\lambda_{11}|u|^{2\sigma}+\lambda_{12}|v|^{\sigma+1}|u|^{\sigma-1})u)dt=u\circ \phi_1dW(t), \\
idv+(\Delta v+(\lambda_{21}|v|^{\sigma-1}|u|^{\sigma+1}+\lambda_{22}|u|^{2\sigma})v)dt=v\circ \phi_2dW(t), \\
u(0,\xi)=u_0(\xi), \quad v(0,\xi)=v_0(\xi),
\end{cases}
\end{align*}
where the coefficients $\lambda_{ij}\in \mathbb{R}$ for $i,j=1,2, \ (W(t))_{t\ge0}$ is a cylindrical Wiener process in the conservative case, the notation $\circ$ stands for Stratonovich product in the right-hand side, and $\phi_1, \phi_2$ are Hilbert-Schmidt operators from $L^2(\mathbb{R}^d)$ into $H^1(\mathbb{R}^d)$.
In \cite{CDZ20}, using the method by A. de Bouard and A. Debussche \cite{BD99,BD03}, they show the $H^1$- local well-posedness for $\sigma\in [0,\frac{2}{d})\cup (\frac{1}{2},\frac{2}{(d-2)^+})$ and the $H^1$- global well-posedness for $\sigma\in [0,\frac{2}{d}]$ (with an additional assumption when $\sigma=\frac{2}{d}$). 

For \eqref{SNLSS}, the authors' previous work in \cite{HHM23} shows $L^2$-local well-posedness in dimension $1\le d\le 4$, $L^2$-global well-posedness in dimension $1\le d\le 3$, $H^1$-local well-posedness in dimension $1\le d\le 6$, and $H^1$-global well-posedness in dimension $1\le d\le 3$. 
So this current paper is a natural continuation of these studies on $H^1$-global result in dimension 4.

In this paper, the existence of $H^1$-global solutions at $d=4$ using rescaling transformations is shown below the mass of the ground state $(\phi,\psi)$ (Theorem \ref{uvglo4}).
We remark that dimension 4 is $L^2$-critical dimension for the quadratic nonlinearity.

\section{Results}

In this section, we introduce the rescaling transformation and the main results.
\begin{defi}
\label{uvdef}
Let $u_0,v_0$ belong to $H^1$. Fix $0<T<\infty$. We say that a triple $(u,v,\tau)$ is a $H^1$-solution of \textnormal{(\ref{SNLSS})}, where $\tau(\le T)$ is an $(\mathcal{F}_t)$-stopping time, and $u=(u(t))_{t\in [0,\tau]},v=(v(t))_{t\in [0,\tau]}$ is an $H^1$-valued continuous $(\mathcal{F}_t)$-adapted process, such that $v\overline{u},u^2\in L^1(0,\tau;H^{-1})$, $\Pas$, and it satisfies $\Pas$
\begin{align}
u(t)&=u_0+\int_0^t(i\Delta u(s)-\mu u(s)+2iv(s)\overline{u(s)})ds+\int_0^tu(s)dW(s), \quad t\in[0,\tau], \\
v(t)&=v_0+\int_0^t(i\frac{1}{2}\Delta v(s)-\overline{\mu} v(s)+iu^2(s))ds+\int_0^tv(s)d\widetilde{W}(s), \quad t\in[0,\tau],
\end{align}
as a system of equations in $H^{-1}$.
\end{defi}
We assume the following decay conditions for the noise coefficients $(\phi_k)_{1\le k\le N}$.
\begin{asm}
\textbf{(H)} \  (Asymptotical flatness) For any $1\le k\le N$, $\phi_k$ satisfies the following for any multi-index $\nu$.
\begin{align}
\lim_{|\xi|\to\infty}\langle \xi\rangle^2|\partial^{\nu}_{\xi}\phi_k(\xi)|=0,
\end{align}
where the bracket is defined by
\[ \langle \xi \rangle:=\sqrt{1+|\xi|^2}. \]
\end{asm}
We introduce the results of local well-posedness and the blow-up alternative for \eqref{SNLSS}.
The authors have shown the local well-posedness for $1\le d\le 6$ in \cite{HHM23}. Here, we state it, especially in $d=4$.
\begin{theorem}
\label{mainH}
(The local well-posedness for \eqref{SNLSS} in four dimensions \cite{HHM23})
Let $d=4$. Assume (H). Then, for each $u_0,v_0\in H^1$ and $0<T<\infty$, there exist a $H^1$-local solution $(u,v,\tau^*)$ of \textnormal{(\ref{SNLSS})}, where $\tau^*\in (0,T]$ is a stopping times. For any $t<\tau^*$, it holds $\Pas$ that 
\begin{align}
u|_{[0,t]},v|_{[0,t]}\in C([0,t];H^1)\cap L^2(0,t;W^{1,4}).
\end{align}
Moreover, we have the blowup alternative, that is, for $\Pas \ \omega$, either $\tau^*(u_0,v_0)(\omega)=T$ or
\begin{align}
\lim_{t\to \tau^*(u_0,v_0)(\omega)}(\|u(t)(\omega)\|_{H^1}&+\|v(t)(\omega)\|_{H^1})=\infty.
\end{align}
\end{theorem}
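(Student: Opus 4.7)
The plan is to eliminate the multiplicative noise via the rescaling transformation introduced in \cite{BDR09} and used in \cite{BRZ14,BRZ16}. Setting $y(t,\xi):=e^{-W(t,\xi)}u(t,\xi)$ and $z(t,\xi):=e^{-\widetilde W(t,\xi)}v(t,\xi)$, and noting that $W=i\sum_k\phi_kB_k$ is purely imaginary with $\widetilde W=2W$, the factors $e^{\pm W}$ and $e^{\pm\widetilde W}$ have unit modulus and, under (H), are pathwise smooth with all $\xi$-derivatives bounded. By It\^o's formula the quadratic variation correction $-\tfrac12\sum_k\phi_k^2=-\mu$ exactly cancels the damping $-\mu u\,dt$ in \eqref{SNLSS}, and analogously for $v$. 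Using $\overline W=-W$ and $\widetilde W=2W$, the nonlinear pieces reduce to the same quadratic forms as in \eqref{NLSS}, so pathwise $(y,z)$ satisfies a random Schr\"odinger system
\begin{align*}
\partial_t y &= i\Delta y + A_1(\omega,t,\xi)\cdot\nabla y + B_1(\omega,t,\xi)y + 2iz\bar y,\\
\partial_t z &= \tfrac{i}{2}\Delta z + A_2(\omega,t,\xi)\cdot\nabla z + B_2(\omega,t,\xi)z + iy^2,
\end{align*}
with initial data $(u_0,v_0)$, where $A_i,B_i$ arise from $e^{\mp W}\Delta(e^{\pm W}\,\cdot\,)$ and are smooth in $\xi$ and bounded on $[0,T]$ for $\Pro$-almost every $\omega$.

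For each fixed $\omega$ I would solve this pathwise by a contraction argument in the Strichartz space
\[
X_\tau:=C([0,\tau];H^1)\cap L^2(0,\tau;W^{1,4}),
\]
in which $(2,4)$ is Schr\"odinger-admissible for $d=4$. The quadratic nonlinearities are controlled in the dual pair $L^2_tL^{4/3}_\xi$ by H\"older, for instance
\[
\|\nabla(z\bar y)\|_{L^2(0,\tau;L^{4/3})}\lesssim \|\nabla z\|_{L^2(0,\tau;L^4)}\|y\|_{L^\infty(0,\tau;L^2)}+\|z\|_{L^\infty(0,\tau;L^2)}\|\nabla y\|_{L^2(0,\tau;L^4)},
\]
and symmetrically for $y^2$; the first-order random terms $A_i\cdot\nabla$ and $B_i$ are absorbed by an inhomogeneous Strichartz bound on a short interval, since their coefficients are deterministically bounded path by path under (H). The smallness needed to close the fixed point comes from $\|e^{it\Delta}u_0\|_{L^2(0,\tau;L^4)}\to 0$ as $\tau\to 0$, yielding a strictly positive path-dependent existence time $\tau^*(\omega)$. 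Transferring back via $u=e^Wy$ and $v=e^{\widetilde W}z$ preserves $(\mathcal F_t)$-adaptedness because the transformation is pathwise and $W(t,\cdot)$ is $\mathcal F_t$-measurable. The blow-up alternative then follows by a standard maximality argument: if $\tau^*<T$ and $\limsup_{t\uparrow\tau^*}(\|u(t)\|_{H^1}+\|v(t)\|_{H^1})<\infty$, one passes to an $H^1$-limit at $\tau^*$ and restarts the local theory, contradicting the maximality of $\tau^*$.

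The main obstacle is the $L^2$-critical character of the quadratic nonlinearity at $d=4$: the H\"older estimate above carries no positive power of $\tau$ in front of the $L^\infty_tL^2_\xi$ factor, so smallness must be extracted entirely from the $L^2_tL^4_\xi$ norm of the linear evolution and from the size of the data. Coupled to this, one must check that $\tau^*$ is a genuine $(\mathcal F_t)$-stopping time rather than merely a pathwise existence time; this is achieved by choosing the contraction radius on deterministic level sets of the random bounds for $A_i,B_i$ (which depend on $\sup_{t\le T}|B_k(t)|$ and on the (H)-seminorms of $\phi_k$) and then gluing across those sets, giving $\tau^*$ the required measurability.
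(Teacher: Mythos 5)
Your overall strategy coincides with the one the paper points to: Theorem \ref{mainH} is imported from \cite{HHM23}, and the paper's only indication of proof is that it rests on the equivalence with the rescaled system \eqref{RSNLSS} (Theorem \ref{equiv}) followed by a pathwise local theory for the resulting random equation. Your computation of the rescaling is correct (the It\^o correction $-\tfrac12\sum_k\phi_k^2$ cancels the damping $-\mu u\,dt$, and since $\widetilde W=2W$ the quadratic nonlinearities keep the same form as in \eqref{NLSS}), and $(2,4)$ is indeed the admissible Strichartz pair to use in $d=4$.

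The gap is in the sentence claiming that ``the first-order random terms $A_i\cdot\nabla$ and $B_i$ are absorbed by an inhomogeneous Strichartz bound on a short interval, since their coefficients are deterministically bounded path by path.'' Boundedness of $b_i$ is not the issue; derivative loss is. To close a contraction in $X_\tau=C([0,\tau];H^1)\cap L^2(0,\tau;W^{1,4})$ built on the free group $e^{it\Delta}$, you must place the source $b_i\cdot\nabla y$ in a dual Strichartz norm at the $H^1$ level, i.e.\ you need $\nabla(b_i\cdot\nabla y)$ in $L^2(0,\tau;L^{4/3})$; this contains $b_i$ contracted against second derivatives of $y$, which are not controlled in $X_\tau$. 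No choice of H\"older exponents or shrinking of $\tau$ repairs this: the first-order term is simply not perturbative with respect to $e^{it\Delta}$ in $H^1$. This is exactly where the asymptotic flatness hypothesis (H) is actually used in \cite{BRZ14,BRZ16,HHM23}: one proves Strichartz (and local smoothing) estimates directly for the evolution generated by the full variable-coefficient operators $A_1(t)=i(\Delta+b_1\cdot\nabla+c_1)$ and $A_2(t)=i(\tfrac12\Delta+b_2\cdot\nabla+c_2)$, exploiting the decay $\langle\xi\rangle^2|\partial_\xi^\nu\phi_k|\to0$, and only the genuinely nonlinear terms $2iz\overline y$ and $iy^2$ are run through Duhamel. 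Your argument invokes only boundedness of the coefficients, never their decay, which is a signal that it cannot be complete as written. A secondary, non-fatal remark: the quadratic nonlinearity is $H^1$-subcritical in $d=4$, so the alternative split $\|\nabla(z\overline y)\|_{L^{4/3}}\lesssim\|z\|_{H^1}\|y\|_{H^1}$ (using $H^1\hookrightarrow L^4$) gains a factor $\tau^{1/2}$ after H\"older in time; the ``critical'' difficulty you describe, and the need to extract smallness from $\|e^{it\Delta}u_0\|_{L^2(0,\tau;L^4)}$, is therefore avoidable at the $H^1$ level.
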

The proof is based on the equivalence of the solutions of the equations (\ref{SNLSS}) and that of (\ref{RSNLSS}).

To connect the stochastic system with white noise (\ref{SNLSS}) into the following random system without white noise, we consider the rescaling transformation $u=e^Wy, \ v=e^{\widetilde{W}}z$.
Then, $y,z$ satisfy
\begin{eqnarray}
\label{RSNLSS}
\begin{cases}
\displaystyle \frac{\partial y(t,\xi)}{\partial t}=A_1(t)y(t,\xi)+2iz(t,\xi)\overline{y(t,\xi)}, \\
\displaystyle \frac{\partial z(t,\xi)}{\partial t}=A_2(t)z(t,\xi)+iy^2(t,\xi), \\
y(0,\xi)=u_0(\xi), \quad z(0,\xi)=v_0(\xi).
\end{cases}
\end{eqnarray}
Here,
\begin{align}
\label{EO}
A_1(t)y(t,\xi)&=ie^{-W(t,\xi)}\Delta (e^{W(t,\xi)}y(t,\xi)) \nonumber \\
&=i(\Delta +b_1(t,\xi)\cdot \nabla +c_1(t,\xi))y(t,\xi), \\
\label{EOL}
A_2(t)z(t,\xi)&=\frac{1}{2}ie^{-\widetilde{W}(t,\xi)}\Delta (e^{\widetilde{W}(t,\xi)}z(t,\xi)) \nonumber \\
&=i(\frac{1}{2}\Delta +b_2(t,\xi)\cdot \nabla +c_2(t,\xi))z(t,\xi),
\end{align}
with
\begin{align}
\label{b}
b_1(t,\xi)&=2\nabla W(t,\xi)=2i\sum_{k=1}^N\nabla \phi_k(\xi)B_k(t), \\
b_2(t,\xi)&=\nabla \widetilde{W}(t,\xi)=i\sum_{k=1}^N\nabla (2\phi_k(\xi))B_k(t), \\
\label{c}
c_1(t,\xi)&=\sum_{j=1}^4(\partial_jW(t,\xi))^2+\Delta W(t,\xi) \nonumber \\
&=-\sum_{j=1}^4(\sum_{k=1}^N\partial_j\phi_k(\xi)B_k(t))^2+i\sum_{k=1}^N\Delta \phi_k(\xi)B_k(t), \\
c_2(t,\xi)&=\frac{1}{2}\sum_{j=1}^4(\partial_j\widetilde{W}(t,\xi))^2+\frac{1}{2}\Delta \widetilde{W}(t,\xi) \nonumber \\
&=-\frac{1}{2}\sum_{j=1}^4(\sum_{k=1}^N\partial_j(2\phi_k(\xi))B_k(t))^2+\frac{1}{2}i\sum_{k=1}^N\Delta (2\phi_k(\xi))B_k(t).
\end{align}
We give the definition of $H^1$-solution for the rescaled system \eqref{RSNLSS}.
\begin{defi}
\label{yzdef}
Let $u_0,v_0$ belong to $H^1$. Fix $0<T<\infty$. We say that a triple $(y,z,\tau)$ is a $H^1$-solution of (\ref{RSNLSS}), where $\tau(\le T)$ is an $(\mathcal{F}_t)$-stopping time, and $y=(y(t))_{t\in [0,\tau]},z=(z(t))_{t\in [0,\tau]}$ is an $H^1$-valued continuous $(\mathcal{F}_t)$-adapted process, such that $z\overline{y},y^2\in L^1(0,\tau;H^{-1})$, $\Pas$, and it satisfies $\Pas$
\begin{align}
y(t)&=u_0+\int_0^tA_1(s)y(s)ds+\int_0^t2iz(s)\overline{y(s)}ds, \quad t\in[0,\tau], \\
z(t)&=v_0+\int_0^tA_2(s)z(s)ds+\int_0^tiy^2(s)ds, \quad t\in[0,\tau].
\end{align}
as a system of equations in $H^{-1}$.
\end{defi}
We introduce the equivalence of the solution of the stochastic equation (\ref{SNLSS}) and the solution of the random equation (\ref{RSNLSS}).
\begin{theorem}
\label{equiv}
(The equivalence of the solutions for the systems of (\ref{SNLSS}) and (\ref{RSNLSS}), \cite{HHM23})
\begin{enumerate}
\item Let $(y,z,\tau)$ be a $H^1$-solution of (\ref{RSNLSS}) in the sense of Definition \ref{yzdef}. Set $u:=e^Wy,v:=e^{\widetilde{W}}z$. Then $(u,v,\tau)$ is a $H^1$-solution of (\ref{SNLSS}) in the sense of Definition \ref{uvdef}.
\item Let $(u,v,\tau)$ be a $H^1$-solution of (\ref{SNLSS}) in the sense of Definition \ref{uvdef}. 
Set $y:=e^{-W}u,z:=e^{-\widetilde{W}}v$. Then $(y,z,\tau)$ is a $H^1$-solution of (\ref{RSNLSS}) in the sense of Definition \ref{yzdef}.
\end{enumerate}
\end{theorem}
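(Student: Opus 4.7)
The plan is a direct Itô-calculus computation combined with algebraic identities forced by the special structure of the noise. Since every $\phi_{k}$ is real-valued, $W=\sum_{k}i\phi_{k}B_{k}$ is purely imaginary, so $\overline{W}=-W$, $|e^{W}|=1$, and $\widetilde{W}=2W$, $\widetilde{\mu}=4\mu$. Under assumption (H), multiplication by $e^{\pm W}$ and $e^{\pm\widetilde{W}}$ preserves $H^{1}$ and $H^{-1}$, so all products below make sense in the spaces demanded by Definitions \ref{uvdef}--\ref{yzdef}. A first Itô calculation gives
\[de^{W}=e^{W}dW+\tfrac{1}{2}e^{W}(dW)^{2}=e^{W}dW-\mu\,e^{W}\,dt,\]
because $(dW)^{2}=\sum_{k}(i\phi_{k})^{2}\,dt=-2\mu\,dt$, and the analogues with $-W$, $\widetilde{W}$, $-\widetilde{W}$ follow in the same way, using $(d\widetilde{W})^{2}=-2\widetilde{\mu}\,dt$.

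For assertion (1), suppose $(y,z,\tau)$ solves (\ref{RSNLSS}) and set $u:=e^{W}y$, $v:=e^{\widetilde{W}}z$. Because $y$ solves a pathwise random PDE with no martingale part, its cross-variation with $e^{W}$ vanishes and the Itô product rule gives
\[du=(de^{W})y+e^{W}dy=u\,dW-\mu u\,dt+e^{W}\bigl(A_{1}(t)y+2iz\overline{y}\bigr)\,dt.\]
The definition (\ref{EO}) yields $e^{W}A_{1}(t)y=i\Delta(e^{W}y)=i\Delta u$, and the relations $z=e^{-2W}v$ and $\overline{y}=e^{-\overline{W}}\overline{u}=e^{W}\overline{u}$ yield the gauge identity $e^{W}\cdot 2iz\overline{y}=2iv\overline{u}$; together these collapse the right-hand side into the first line of (\ref{SNLSS}). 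The computation for $v$ is identical, now using $e^{\widetilde{W}}y^{2}=u^{2}$ and $(d\widetilde{W})^{2}=-2\widetilde{\mu}\,dt$.

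For assertion (2), set $y:=e^{-W}u$, $z:=e^{-\widetilde{W}}v$ and apply the product rule again. Now $u$ carries the nontrivial martingale part $u\,dW$, so the cross-variation $d\langle e^{-W},u\rangle=-e^{-W}u(dW)^{2}=2\mu e^{-W}u\,dt$ does not vanish. Collecting terms, the two Itô drifts $-\mu e^{-W}u\,dt$ (from $de^{-W}$) and $-\mu e^{-W}u\,dt$ (from the $-\mu u$ drift in (\ref{SNLSS})) are exactly canceled by this cross-variation, while the $dW$-contributions $-e^{-W}u\,dW$ and $e^{-W}u\,dW$ annihilate each other. The surviving deterministic part is $e^{-W}(i\Delta u+2iv\overline{u})\,dt$, which rearranges to $\bigl(A_{1}(t)y+2iz\overline{y}\bigr)\,dt$ by the same gauge identities as in direction (1); the equation for $z$ is handled analogously.

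The main point requiring care is the rigorous justification of the product rule in the $H^{-1}$-valued setting, where both factors are genuinely infinite-dimensional objects. One must verify that the scalar process $\xi\mapsto e^{\pm W(t,\xi)}$ is a $C^{2}$ function of the finite-dimensional vector $(B_{1}(t),\dots,B_{N}(t))$ with values in the multiplier algebra of $H^{1}$, that the products $e^{W}y$ and $e^{-W}u$ remain $H^{1}$-valued continuous adapted processes almost surely (which follows from $\phi_{k}\in C^{\infty}_{b}$ and (H)), and that the classical Itô product rule extends to a product of an $H^{1}$-valued Itô process with such a scalar $C^{2}$ process---performed pointwise in $\xi$ and then localized using (H). The transfer of the integrability hypotheses $v\overline{u},u^{2}\in L^{1}(0,\tau;H^{-1})$ and $z\overline{y},y^{2}\in L^{1}(0,\tau;H^{-1})$ is then immediate because $|e^{\pm W}|=|e^{\pm\widetilde{W}}|=1$. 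This technical framework is exactly the one developed in the authors' earlier work \cite{HHM23}, from which the theorem is imported.
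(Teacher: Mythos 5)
The paper states Theorem \ref{equiv} without proof, importing it directly from \cite{HHM23}, so there is no in-paper argument to compare against; your proposal supplies the standard rescaling computation that underlies that reference. Your key identities all check out --- $(dW)^2=-2\mu\,dt$, the cancellation of the Itô correction, the cross-variation term $2\mu e^{-W}u\,dt$ in direction (2), and the gauge identities $e^{W}\cdot 2iz\overline{y}=2iv\overline{u}$ and $e^{\widetilde{W}}y^{2}=u^{2}$ (using $\overline{W}=-W$ and $\widetilde{W}=2W$) --- and you correctly flag the only genuinely technical point, namely justifying the product rule for an $H^{1}$-valued process against the scalar semimartingale $e^{\pm W}$, which is exactly the content of the cited work.
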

By the equivalence of two expressions of solutions via the rescaling transformations (\ref{SNLSS}) and (\ref{RSNLSS}),
Theorem \ref{mainH} is rewritten as Theorem \ref{ymain}.
Here, we state it, especially in $d=4$.
\begin{theorem}
\label{ymain}
(The local well-posedness for \eqref{RSNLSS} in four dimensions \cite{HHM23})
Let $d=4$. Assume (H). Then, for each $u_0,v_0\in H^1$ and $0<T<\infty$, there exist a $H^1$-local solution $(y,z,\tau^*)$ of (\ref{RSNLSS}), where $\tau^*\in (0,T]$ is a stopping times. For any $t<\tau^*$, it holds $\Pas$ that 
\begin{align}
y|_{[0,t]},z|_{[0,t]}\in C([0,t];H^1)\cap L^2(0,t;W^{1,4}).
\end{align}
Moreover, we have the blowup alternative, that is, for $\Pas \ \omega$, either $\tau^*(u_0,v_0)(\omega)=T$, or
\begin{align}
\lim_{t\to \tau^*(u_0,v_0)(\omega)}(\|y(t)(\omega)\|_{H^1}&+\|z(t)(\omega)\|_{H^1})=\infty.
\end{align}
\end{theorem}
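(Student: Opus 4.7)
The plan is to transcribe Theorem \ref{mainH} along the bijection supplied by Theorem \ref{equiv}. Given $u_0, v_0 \in H^1$ and $T > 0$, I would first invoke Theorem \ref{mainH} to produce an $H^1$-local solution $(u, v, \tau^*)$ of \eqref{SNLSS} with $u|_{[0,t]}, v|_{[0,t]} \in C([0,t]; H^1) \cap L^2(0,t; W^{1,4})$ for every $t < \tau^*$. Setting $y := e^{-W} u$ and $z := e^{-\widetilde{W}} v$, part (2) of Theorem \ref{equiv} immediately yields that $(y, z, \tau^*)$ is an $H^1$-solution of \eqref{RSNLSS} in the sense of Definition \ref{yzdef}; what remains is to transfer the path regularity and the blowup alternative from $(u, v)$ to $(y, z)$.

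Both of these reduce to controlling the multipliers $e^{\pm W}$ and $e^{\pm \widetilde{W}}$ in Sobolev spaces. Under assumption (H), each $\phi_k$ lies in $C_b^\infty(\mathbb{R}^d, \mathbb{R})$, and for each fixed sample $\omega$ the random fields $W(t, \xi)(\omega)$ and $\widetilde{W}(t, \xi)(\omega)$ are finite linear combinations of the $\phi_k$ with coefficients $B_k(t)(\omega)$ that are continuous on $[0, T]$. Consequently $e^{\pm W(t, \xi)(\omega)}$ and $e^{\pm \widetilde{W}(t, \xi)(\omega)}$ together with their spatial derivatives of any order are uniformly bounded on $[0, T] \times \mathbb{R}^d$. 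By the Leibniz rule this gives solution-independent, two-sided bounds
\[
C(\omega, T)^{-1}\bigl(\|u(s)\|_{H^1} + \|v(s)\|_{H^1}\bigr) \le \|y(s)\|_{H^1} + \|z(s)\|_{H^1} \le C(\omega, T)\bigl(\|u(s)\|_{H^1} + \|v(s)\|_{H^1}\bigr),
\]
and the analogous equivalence for the $W^{1,4}$ norm.

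The upper bound transfers the regularity $y|_{[0,t]}, z|_{[0,t]} \in C([0,t]; H^1) \cap L^2(0,t; W^{1,4})$ from the corresponding fact in Theorem \ref{mainH}. For the blowup alternative, the two-sided bound ensures that as $s \uparrow \tau^*(\omega)$ one sum of $H^1$ norms blows up if and only if the other does, so the dichotomy for $(u, v)$ in Theorem \ref{mainH} translates verbatim to $(y, z)$. The only mildly subtle step is keeping the equivalence constants $C(\omega, T)$ independent of the solution; assumption (H) together with the boundedness of each continuous path $B_k(\cdot)(\omega)$ on $[0, T]$ delivers precisely this, so the statement reduces to a clean bookkeeping exercise on top of Theorems \ref{mainH} and \ref{equiv}.
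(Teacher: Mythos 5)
Your proposal is correct and is essentially the paper's own argument: the paper obtains Theorem \ref{ymain} from Theorem \ref{mainH} by exactly this transfer through the equivalence of Theorem \ref{equiv}, citing \cite{HHM23} for the underlying local theory. Your added bookkeeping (unimodularity of $e^{\pm W}$, boundedness of its spatial derivatives along each fixed Brownian path on $[0,T]$, and the resulting two-sided $H^1$ and $W^{1,4}$ norm equivalences) is the right way to make the paper's one-line ``rewriting'' rigorous.
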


The following result is the main theorem in this paper.
\begin{theorem}
\label{uvglo4}
(Global well-posedness for (\ref{SNLSS}) below the ground state)
Let $d=4$ and $(\phi,\psi)$ be a ground state to \eqref{ground}. Assume (H). If $u_0,v_0\in H^1$ satisfy $M(u_0,v_0)<M(\phi,\psi)$, then the corresponding solution $(u,v)$ to (\ref{SNLSS}) exists globally almost surely.
\end{theorem}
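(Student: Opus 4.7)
The strategy is to use the equivalence in Theorem~\ref{equiv} and to work pathwise with the random system (\ref{RSNLSS}). By the blow-up alternative in Theorem~\ref{ymain}, it suffices to show that, for every $T>0$ and $\Pro$-almost every $\omega$, the quantity $\|y(t)\|_{H^{1}}+\|z(t)\|_{H^{1}}$ remains bounded on the existence interval $[0,\tau^{*}(\omega))$; this forces $\tau^{*}(\omega)=T$, and a global solution of (\ref{SNLSS}) is then recovered from $u=e^{W}y$, $v=e^{\widetilde W}z$.

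First I would check that mass and the coupling functional $P$ are preserved by the rescaling. Since $W=i\sum_{k}\phi_{k}B_{k}$ is purely imaginary, $|e^{W}|=|e^{\widetilde W}|=1$, so $|y|=|u|$, $|z|=|v|$, and hence $M(y,z)(t)=M(u,v)(t)$; and using $\widetilde W=2W$, $\overline W=-W$ one gets $v\bar u^{2}=e^{\widetilde W+2\overline W}z\bar y^{2}=z\bar y^{2}$, so $P(y,z)=P(u,v)$. Mass conservation for (\ref{SNLSS}) is built into the It\^o corrections $-\mu u$, $-\widetilde\mu v$; one may also verify it directly on (\ref{RSNLSS}) from the identity $\nabla\!\cdot\!(ib_{1})+2\,\mathrm{Im}\,c_{1}=0$ encoded in (\ref{b})--(\ref{c}), combined with cancellation of the cubic contributions between the $y$- and $z$-equations once $\|z\|_{L^{2}}^{2}$ is weighted by $2$. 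Consequently $M(y(t),z(t))=M(u_{0},v_{0})<M(\phi,\psi)$ for every $t<\tau^{*}$.

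Next I would invoke the sharp $L^{2}$-critical Gagliardo--Nirenberg inequality attached to $(\phi,\psi)$. Because in $d=4$ the functionals $P$ and $K$ share the same scaling while $M$ is scale-invariant, the variational characterization of the ground state as the minimizer of $I$ yields
\[
2|P(f,g)|\le\frac{M(f,g)}{M(\phi,\psi)}\,K(f,g)\qquad\text{for all }(f,g)\in H^{1}\times H^{1}.
\]
Applied to $(y,z)$ together with $M(y,z)=M(u_{0},v_{0})<M(\phi,\psi)$, this gives the coercivity
\[
\Bigl(1-\tfrac{M(u_{0},v_{0})}{M(\phi,\psi)}\Bigr)K(y,z)(t)\le E(y,z)(t),
\]
so it is enough to control $E(y,z)$ pathwise on $[0,\tau^{*}\wedge T]$.

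The main step, and the main obstacle, is to derive a pathwise differential inequality for $E(y,z)$. Writing $A_{1}=i\Delta+\beta\!\cdot\!\nabla+ic_{1}$ and $A_{2}=\tfrac{i}{2}\Delta+\beta\!\cdot\!\nabla+ic_{2}$ with $\beta(t,\xi):=ib_{1}(t,\xi)=-2\sum_{k}\nabla\phi_{k}(\xi)B_{k}(t)$ real-valued (note $b_{1}=b_{2}$ by (\ref{b})), I would compute $\tfrac{d}{dt}K(y,z)$ and $\tfrac{d}{dt}2P(y,z)$ separately and take their difference. The contributions from $i\Delta$, $\tfrac{i}{2}\Delta$, $2iz\bar y$ and $iy^{2}$ cancel exactly, reproducing the conservation of $E$ in (\ref{NLSS}). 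The remaining contributions come from $\beta\!\cdot\!\nabla$ and $ic_{1},ic_{2}$; the potentially uncontrollable ones are the cubic expressions containing a spatial derivative, such as $\mathrm{Re}\!\int\bar z\,y(\beta\!\cdot\!\nabla y)\,d\xi$ and $\mathrm{Re}\!\int y^{2}(\beta\!\cdot\!\nabla\bar z)\,d\xi$, which cannot be estimated by $M$ and $K$ alone. These terms cancel between $\tfrac{d}{dt}K$ and $\tfrac{d}{dt}2P$ after integration by parts, crucially because the same real transport field $\beta$ enters both equations; this is the cancellation advertised in the abstract. What survives is a quadratic form in $(y,\nabla y,z,\nabla z)$ whose coefficients involve $\nabla\beta$, $\nabla c_{i}$ and $\mathrm{Im}\,c_{i}$, all pointwise bounded by some $C_{\omega}(T)$ on $[0,T]\times\mathbb{R}^{4}$ by (H) and the continuity of Brownian paths. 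This yields
\[
\Bigl|\tfrac{d}{dt}E(y,z)(t)\Bigr|\le C_{\omega}(T)\bigl(K(y,z)(t)+M(u_{0},v_{0})\bigr)\le C'_{\omega}(T)\bigl(E(y,z)(t)+1\bigr),
\]
and Gronwall's inequality gives the required pathwise control on $E(y,z)$, hence on $K(y,z)$ and therefore on $\|y\|_{H^{1}}+\|z\|_{H^{1}}$. The blow-up alternative is thus ruled out, $\tau^{*}=T$, and since $T$ was arbitrary the proof is complete.
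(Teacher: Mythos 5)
Your strategy coincides with the paper's: transfer to the random system \eqref{RSNLSS}, use mass conservation and the sharp Gagliardo--Nirenberg inequality to make $E$ coercive over $K$ below the ground-state mass, bound $\frac{d}{dt}E(y,z)$ pathwise by exploiting the cancellation of the derivative-bearing cubic terms, and close with Gronwall and the blow-up alternative. Two points in your write-up are inaccurate, though neither is fatal. First, the sharp inequality attached to $(\phi,\psi)$ is $P(f,g)\le\frac{1}{2}\sqrt{M(f,g)/M(\phi,\psi)}\,K(f,g)$ (Lemma \ref{GNI}); your version $2|P|\le (M/M(\phi,\psi))K$ has the wrong power of the mass ratio and is actually false for small mass (test it on $(\lambda\phi,\lambda\psi)$ with $\lambda<1$, where the sharp inequality is an equality). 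The correct form still yields $E\ge\bigl(1-\sqrt{M(u_0,v_0)/M(\phi,\psi)}\bigr)K$ with a positive constant under the hypothesis $M(u_0,v_0)<M(\phi,\psi)$, so the coercivity and the threshold are unaffected. Second, after the cancellation of the terms involving $(\nabla\overline{y})z\overline{y}$ and $(\nabla\overline{z})y^2$, what survives in $\frac{d}{dt}E$ is \emph{not} purely a quadratic form in $(y,\nabla y,z,\nabla z)$ with bounded coefficients: a derivative-free cubic term $2\sum_{k}B_k\,\mathrm{Re}\int\Delta\phi_k\, z\overline{y}^2\,d\xi$ remains (this is the last term of \eqref{ene}). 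Bounding it by $C(K+M)$ is not automatic; it requires applying Lemma \ref{GNI} once more together with mass conservation so that one power of the solution is absorbed into a constant --- exactly the step the paper performs in \eqref{inter}. Since you already have both ingredients in hand and your final differential inequality is the correct one, this is a repairable omission, but as written your argument skips the one surviving cubic term that genuinely needs an estimate.
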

By the equivalence of two expressions of solutions via the rescaling transformations (\ref{SNLSS}) and (\ref{RSNLSS}),
Theorem \ref{uvglo4} is rewritten as Theorem \ref{yzglo4}.
\begin{theorem}
\label{yzglo4}
(Global well-posedness for (\ref{RSNLSS}) below the ground state)
Let $d=4$ and $(\phi,\psi)$ be a ground state to \eqref{ground}. Assume (H). If $u_0,v_0\in H^1$ satisfy $M(u_0,v_0)<M(\phi,\psi)$, then the corresponding solution $(y,z)$ to (\ref{RSNLSS}) exists globally almost surely.
\end{theorem}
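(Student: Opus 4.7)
The plan is to invoke the blow-up alternative in Theorem \ref{ymain}: it suffices to show that for every $0<T<\infty$, the quantity $\|y(t)\|_{H^1}+\|z(t)\|_{H^1}$ remains finite on $[0,\tau^*)$ $\Pro$-a.s., for then $\tau^*=T$ and, letting $T\to\infty$, the solution is global. The $L^2$-part is handled by exact mass conservation, and the gradient part is closed by a Gronwall argument for the energy $E(y,z)=K(y,z)-2P(y,z)$.

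Because $W$ and $\widetilde W$ are purely imaginary, $|e^{W}|=|e^{\widetilde W}|=1$, so $|y|=|u|$ and $|z|=|v|$ pointwise, hence $M(y,z)(t)=M(u,v)(t)=M(u_0,v_0)<M(\phi,\psi)$. Combined with the sharp Gagliardo--Nirenberg-type inequality attached to the ground state of \eqref{ground} (cf.\ \cite{HOT13}), $2P(f,g)\le \sqrt{M(f,g)/M(\phi,\psi)}\,K(f,g)$, this yields the uniform coercivity
\[
E(y,z)(t)\ \ge\ \delta\,K(y,z)(t),\qquad \delta:=1-\sqrt{M(u_0,v_0)/M(\phi,\psi)}>0,
\]
throughout the lifespan of the solution.

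The heart of the argument is a direct time-differentiation of $E(y,z)$ along the equation (first for suitable mollifications in $\xi$, then by passage to the limit). Writing $y_t=i\Delta y+iB_1y+2iz\bar y$ and $z_t=\tfrac{i}{2}\Delta z+iB_2 z+iy^2$ with $B_j=b_j\cdot\nabla+c_j$, the $i\Delta$ contributions reproduce the pairwise cancellation that forces $E'\equiv 0$ in the deterministic system \eqref{NLSS}. The surviving terms fall into two classes. The quadratic (in $(y,z)$) remainder $2\,\text{Im}\langle B_1 y,\Delta y\rangle+\text{Im}\langle B_2 z,\Delta z\rangle$ is bounded, after one integration by parts, by $C(\omega,t)\,K(y,z)$. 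The cubic remainder $2\,\text{Im}\langle B_2 z,\bar y^2\rangle-4\,\text{Im}\langle z\bar y,\overline{B_1 y}\rangle$ is the delicate point: the identity $b_1=b_2=2\nabla W$ makes the two top-order $b$-pieces telescope into $2\,\text{Im}\int b_1\cdot\nabla(z\bar y^2)\,d\xi=-2\,\text{Im}\int(\nabla\cdot b_1)\,z\bar y^2\,d\xi$, and the $c$-pieces reduce via the identity $c_2-2\bar c_1=3i\sum_k\Delta\phi_k B_k$ to the same structure, leaving only the lower-order residue $\sum_k B_k(t)\,\text{Re}\int\Delta\phi_k\,z\bar y^2\,d\xi$, which by H\"older and the $d=4$ embedding $H^1\hookrightarrow L^4$ is controlled by $C(\omega,t)\,M(u_0,v_0)^{1/2}K(y,z)$. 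In both classes $C(\omega,t)$ is polynomial in $(B_k(t))_{k\le N}$ with coefficients depending only on $\|\phi_k\|_{C_b^2}$, so $\sup_{t\in[0,T]}C(\omega,t)<\infty$ $\Pro$-a.s.

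Combining the estimates yields the ODE inequality $|\tfrac{d}{dt}E(y,z)(t)|\le C(\omega,t)\,K(y,z)(t)\le (C(\omega,t)/\delta)\,E(y,z)(t)$, and Gronwall's lemma gives $\sup_{t\in[0,\tau^*)}E(y,z)(t)<\infty$ $\Pro$-a.s.; the coercivity inequality promotes this to a uniform bound on $K(y,z)$, and mass conservation then closes the $H^1$ control. The blow-up alternative in Theorem \ref{ymain} thus forces $\tau^*=T$, and $T$ is arbitrary. The main obstacle I anticipate is the cubic cancellation in the preceding paragraph: without exploiting $b_1=b_2$, the top-order cubic terms would leave an unmanageable contribution of size $\|\nabla y\|_{L^2}^3$ outside the reach of any Gronwall closure at $L^2$-critical scaling. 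Justifying the time-differentiation of $E(y,z)$ at $H^1$-regularity through mollification is the other technical point, but is standard once the formal cancellations are identified.
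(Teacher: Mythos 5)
Your proposal is correct and follows essentially the same route as the paper: mass conservation plus the sharp Gagliardo--Nirenberg coercivity $E\ge(1-\sqrt{M(u_0,v_0)/M(\phi,\psi)})K$ below the ground state, a time-differentiation of the energy in which the derivative-bearing cubic terms $(\nabla\overline{y})z\overline{y}$ and $(\nabla\overline{z})y^2$ cancel (leaving only $\sum_k B_k\operatorname{Re}\int\Delta\phi_k\,z\overline{y}^2\,d\xi$, exactly as in the paper's \eqref{ene}), and a Gronwall closure feeding the blow-up alternative. Your algebraic identities $b_1=b_2$ and $c_2-2\overline{c_1}=3i\sum_k\Delta\phi_k B_k$ are a more structural (and verifiably consistent) packaging of the paper's term-by-term computation of $I_1,\dots,I_8$ in Lemma \ref{upper}; the only cosmetic imprecisions are that the quadratic and residual cubic remainders are really bounded by $C(\omega,t)(M+K)$ rather than $C(\omega,t)K$ alone, which is harmless since $M$ is conserved.
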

We prove Theorem \ref{yzglo4} by using the argument in \cite{HOT13}. We use the upper bound of energy (Lemma \ref{upper}).
The difficulty in the proof is that the system of equations \eqref{RSNLSS} after rescaling transformations of \eqref{SNLSS} and also \eqref{SNLSS} have conservation of mass \eqref{mass} but not conservation of energy \eqref{enecon}.
This makes it more difficult when considering the time derivative of the energy than the single case; since we estimate the integral of cubic terms $(\nabla \overline{y})z\overline{y}, \ (\nabla \overline{z})y^2$, and $z\overline{y}^2$, specific to the \eqref{RSNLSS} here, we recall the single Schr\"odinger equation no cubic terms appear in estimating the energy \cite{SZ23}. 
It seems to be obvious that cubic terms including spatial derivatives are more difficult than other cubic terms without spatial derivatives, and indeed it is.
However, it was found that a long and appropriate calculation given as the integral terms in $(\nabla \overline{y})z\overline{y}$, and $(\nabla \overline{z})y^2$ can be canceled.
Therefore only the integral term of $z\overline{y}^2$ needs to be estimated, and we have succeeded it in showing the existence of $H^1$-global solutions even if the energy is not conserved.

In the next section, the proof of Theorem \ref{yzglo4} will be given, which is sufficient to obtain Theorem \ref{uvglo4}.

\section{Proof of Main theorem.}

In this section, we prove Theorem \ref{yzglo4} to see Theorem \ref{uvglo4} as well.
We recall the Gagliardo-Nirenberg inequality, which is shown in \cite{HOT13}.
\begin{lem}
\label{GNI}
(Gagliardo-Nirenberg inequality \cite[Theorem 5.1]{HOT13})
Let $f,g\in H^1$ and $(\phi,\psi)$ be a ground state to \eqref{ground}. Then, we have
\[ P(f,g)\le \frac{1}{2}\sqrt{\frac{M(f,g)}{M(\phi,\psi)}}K(f,g), \]
where $K$ and $P$ are given in \eqref{K} and \eqref{P}, respectively.
\end{lem}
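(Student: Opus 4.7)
\textbf{Proof proposal for Lemma \ref{GNI}.}

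The plan is to view the stated bound as a sharp Gagliardo--Nirenberg inequality and to identify the best constant variationally, in the spirit of Weinstein. For $d=4$ and a quadratic nonlinearity, the two-parameter scaling $(f,g)\mapsto(\alpha f(\beta\,\cdot),\alpha g(\beta\,\cdot))$ sends $M,K,P$ to $\alpha^{2}\beta^{-4}M$, $\alpha^{2}\beta^{-2}K$, $\alpha^{3}\beta^{-4}P$ respectively, so the Weinstein quotient
\begin{equation*}
W(f,g):=\frac{P(f,g)^{2}}{M(f,g)\,K(f,g)^{2}}
\end{equation*}
is scale-invariant, and the claim is equivalent to $\sup W=1/(4M(\phi,\psi))$, with the supremum attained at the ground state. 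I would first reduce to the non-negative radial decreasing class: a phase rotation gives $|P(f,g)|\le P(|f|,|g|)$, and Schwarz symmetrization preserves $M$, does not increase $K$, and, via $(f^{2})^{*}=(f^{*})^{2}$ together with the Riesz rearrangement inequality, does not decrease $P$, so $W(f^{*},g^{*})\ge W(f,g)$.

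To produce a maximizer I would normalize a maximizing sequence using the two-parameter scaling so that $M=M_{0}$ and $K=1$, extract a weak $H^{1}\times H^{1}$ limit $(f_{*},g_{*})$, and pass to the limit in $P(f_{n},g_{n})=\int f_{n}^{2}g_{n}\,dx$ using Strauss' compact embedding $H^{1}_{\mathrm{rad}}(\mathbb{R}^{4})\hookrightarrow L^{3}(\mathbb{R}^{4})$ combined with H\"older's inequality; weak lower semicontinuity of $M$ and $K$ then forces $(f_{*},g_{*})$ to realize the supremum. This compactness step is the main obstacle: the two-parameter scale invariance defeats any naive weak-limit argument, and both the rearrangement reduction and the sub-critical radial embedding in $d=4$ are essential to it.

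Computing the Euler--Lagrange equations for $\log W=2\log P-\log M-2\log K$ at $(f_{*},g_{*})$ yields
\begin{equation*}
-\Delta f_{*}+\tfrac{K}{2M}f_{*}=\tfrac{K}{P}g_{*}f_{*},\qquad -\Delta g_{*}+\tfrac{2K}{M}g_{*}=\tfrac{K}{P}f_{*}^{2},
\end{equation*}
with the ratios evaluated at $(f_{*},g_{*})$. Writing $a:=K/(2M)$, $b:=K/P$ and setting $\tilde\phi(y):=\alpha^{-1}f_{*}(y/\beta)$, $\tilde\psi(y):=\alpha^{-1}g_{*}(y/\beta)$ with $\beta^{2}=a$ and $\alpha=2a/b$, a short calculation shows that $(\tilde\phi,\tilde\psi)$ satisfies exactly the ground state system \eqref{ground}; by scale-invariance, $\sup W=W(\tilde\phi,\tilde\psi)$.

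Finally, I would evaluate $W$ on any solution of \eqref{ground}. Testing the two equations against $(\phi,\psi)$ gives the Nehari-type identity $K(\phi,\psi)+M(\phi,\psi)=3P(\phi,\psi)$, while pairing with $(x\cdot\nabla\phi,x\cdot\nabla\psi)$ and integrating by parts gives the Pohozaev identity $K(\phi,\psi)+2M(\phi,\psi)=4P(\phi,\psi)$. Subtracting yields $P(\phi,\psi)=M(\phi,\psi)$ and $K(\phi,\psi)=2M(\phi,\psi)$, so
\begin{equation*}
W(\phi,\psi)=\frac{M(\phi,\psi)^{2}}{M(\phi,\psi)\cdot 4M(\phi,\psi)^{2}}=\frac{1}{4M(\phi,\psi)}.
\end{equation*}
Since these identities give $E(\phi,\psi)=K-2P=0$ and hence $I(\phi,\psi)=\tfrac{1}{2}M(\phi,\psi)$, minimizing $I$ as in the definition of ground state is the same as minimizing $M$ among solutions of \eqref{ground}, and therefore $\sup W=1/(4M(\phi,\psi))$. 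Taking square roots yields the stated inequality.
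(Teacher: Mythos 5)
The paper does not prove this lemma at all: it is imported verbatim as a citation of \cite[Theorem 5.1]{HOT13}, so there is no in-paper argument to compare against. Your proposal supplies a genuine proof, and it is the standard Weinstein-type variational route (which is also essentially how the cited reference proceeds): the scaling exponents for $M$, $K$, $P$ in $d=4$ are correct, the quotient $W=P^2/(MK^2)$ is indeed scale-invariant, the Euler--Lagrange system you write down is what one gets from $\delta(2\log P-\log M-2\log K)=0$, and the rescaling with $\beta^2=K/(2M)$, $\alpha=2a/b$ does map the maximizer onto a solution of \eqref{ground} (I checked both equations). The Nehari identity $K+M=3P$ and the Pohozaev identity $K+2M=4P$ give $P=M$, $K=2M$, hence $W=1/(4M)$ on every nontrivial solution and $E=0$, so minimizing the action $I$ is the same as minimizing $M$; this closes the loop and identifies $\sup W=1/(4M(\phi,\psi))$. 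Two small points you should make explicit to be complete: (i) before extracting a maximizing sequence you need $\sup W<\infty$, which follows from H\"older ($P\le\|g\|_{L^3}\|f\|_{L^3}^2$) and the scalar Gagliardo--Nirenberg inequality $\|h\|_{L^3(\mathbb{R}^4)}\lesssim\|h\|_{L^2}^{1/3}\|\nabla h\|_{L^2}^{2/3}$; (ii) the reduction to nonnegative functions uses the diamagnetic inequality $\|\nabla|f|\|_{L^2}\le\|\nabla f\|_{L^2}$ for complex $f$, not a phase rotation. With those noted, the argument is sound; its value over the paper's treatment is that it is self-contained, while the paper's choice to cite \cite{HOT13} avoids reproving a known sharp inequality.
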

\begin{lem}
\label{Com}
(Conservation of mass)
The solution $(y,z)$ to \eqref{RSNLSS} conserves its mass. That is,
\[ M(y(t),z(t))=M(u_0,v_0). \]
\end{lem}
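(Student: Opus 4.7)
The plan is to compute $\frac{d}{dt}\|y(t)\|_{L^2}^2$ and $\frac{d}{dt}\|z(t)\|_{L^2}^2$ by pairing the two equations in \eqref{RSNLSS} with $y$ and $z$ respectively in the $H^{-1}$--$H^1$ duality, and then show that the resulting contributions cancel in the combination $\|y\|_{L^2}^2 + 2\|z\|_{L^2}^2$. Since \eqref{RSNLSS} is a random PDE (no Itô integral), the argument can be carried out pathwise via the Lions--Magenes lemma: for $y\in C([0,\tau];H^1)$ with $\partial_t y\in L^1(0,\tau;H^{-1})$, the map $t\mapsto \|y(t)\|_{L^2}^2$ is absolutely continuous with derivative $2\mathrm{Re}\langle \partial_t y(t),y(t)\rangle_{H^{-1},H^1}$, and similarly for $z$.

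Step 1 (linear parts vanish). Because $W=i\sum_k \phi_k B_k$ is purely imaginary, $|e^W|=1$ and $\overline{e^{W}}=e^{-W}$. Setting $f:=e^W y$, so that $\overline{y}=e^W\overline{f}$, the definition \eqref{EO} gives
\begin{align*}
\mathrm{Re}\langle A_1 y,y\rangle
 = \mathrm{Re}\int_{\mathbb{R}^4} i\,e^{-W}\Delta(e^W y)\,\overline{y}\,d\xi
 = \mathrm{Re}\int_{\mathbb{R}^4} i\,(\Delta f)\,\overline{f}\,d\xi
 = -\mathrm{Im}\,\|\nabla f\|_{L^2}^2 = 0.
\end{align*}
The identical calculation with $\widetilde{W}=2W$ yields $\mathrm{Re}\langle A_2 z,z\rangle=0$. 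Thus only the quadratic interaction terms contribute to the evolution of each $L^2$-norm.

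Step 2 (interaction terms cancel). Using the two equations,
\begin{align*}
\frac{d}{dt}\|y\|_{L^2}^2 &= 2\,\mathrm{Re}\langle 2iz\overline{y},y\rangle = 4\,\mathrm{Re}\,i\int_{\mathbb{R}^4} z\overline{y}^2\,d\xi = -4\,\mathrm{Im}\int_{\mathbb{R}^4} z\overline{y}^2\,d\xi,\\
\frac{d}{dt}\|z\|_{L^2}^2 &= 2\,\mathrm{Re}\langle iy^2,z\rangle = -2\,\mathrm{Im}\int_{\mathbb{R}^4} y^2\overline{z}\,d\xi = 2\,\mathrm{Im}\int_{\mathbb{R}^4} z\overline{y}^2\,d\xi,
\end{align*}
where in the last equality we used $\mathrm{Im}\int y^2\overline{z}\,d\xi=-\mathrm{Im}\,\overline{\int y^2\overline{z}\,d\xi}=-\mathrm{Im}\int z\overline{y}^2\,d\xi$. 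Multiplying the second line by $2$ and adding to the first,
\begin{align*}
\frac{d}{dt}\bigl(\|y\|_{L^2}^2+2\|z\|_{L^2}^2\bigr) = -4\,\mathrm{Im}\int z\overline{y}^2\,d\xi + 4\,\mathrm{Im}\int z\overline{y}^2\,d\xi = 0,
\end{align*}
so $M(y(t),z(t))\equiv M(u_0,v_0)$ on $[0,\tau^*)$.

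Step 3 (justification). The duality pairings are well defined: the hypothesis $z\overline{y},y^2\in L^1(0,\tau;H^{-1})$ in Definition \ref{yzdef} handles the nonlinearities, while the lower-order coefficients $b_j$, $c_j$ from \eqref{b}--\eqref{c} are bounded (uniformly in $\xi$) thanks to assumption (H) and smoothness of the $\phi_k$, so the linear parts $A_j$ map $H^1\to H^{-1}$ continuously pathwise. The main (mild) obstacle is justifying the chain rule for $\|y\|_{L^2}^2$ at exactly the regularity of Definition \ref{yzdef}; this is resolved either by invoking the Lions--Magenes lemma directly or, if preferred, by mollifying the initial data, performing the computation for the resulting smooth solution, and passing to the limit using the continuous dependence from Theorem \ref{ymain}. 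Once $M(y,z)$ is conserved, the conclusion for \eqref{SNLSS} follows via the equivalence in Theorem \ref{equiv}.
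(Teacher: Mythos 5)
Your proof is correct. The paper itself does not prove Lemma \ref{Com} in the text: it simply appends the Remark that one is in the conservative case ($\mathrm{Re}\,W=0$) and cites \cite[Lemma 4.1]{HHM23}. Your computation is a valid self-contained substitute and rests on exactly the structural facts the Remark alludes to: because $W$ is purely imaginary, $\overline{e^{W}}=e^{-W}$, so $\mathrm{Re}\langle A_jy,y\rangle$ reduces to $\mathrm{Re}\,(-i\|\nabla f\|_{L^2}^2)=0$ (the gauge factor drops out of the $L^2$ pairing), and the weight $2$ in $M(u,v)=\|u\|_{L^2}^2+2\|v\|_{L^2}^2$ is precisely what makes the two interaction contributions $-4\,\mathrm{Im}\int z\overline{y}^2$ and $+4\,\mathrm{Im}\int z\overline{y}^2$ cancel. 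The algebra in Steps 1--2 checks out, and the Lions--Magenes justification in Step 3 is the standard way to make the pathwise chain rule rigorous at the regularity of Definition \ref{yzdef} (in $d=4$ the nonlinear pairings are genuine $L^2$ inner products since $H^1\hookrightarrow L^4$). The only loose thread is your fallback appeal to ``continuous dependence from Theorem \ref{ymain}'': the theorem as stated does not assert continuous dependence, so you should rely on the Lions--Magenes argument rather than the mollification alternative; this does not affect the validity of the proof.
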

\begin{rmk}
Now we consider the conservative case (i.e. $\text{Re} W=0$), so mass is conserved \cite[Lemma 4.1]{HHM23}.
\end{rmk}
Next, we estimate energy.
\begin{lem}
\label{upper}
(Upper bound of energy)
The solution $(y,z)$ to \eqref{RSNLSS} on $[0,\tau^*)$ satisfies
\[ E(y(t),z(t))\le E(u_0,v_0)+C(\tau^*)+C(\tau^*)\int_0^tK(y(s),z(s))ds, \]
for any $0<t<\tau^*$.
\end{lem}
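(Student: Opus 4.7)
The plan is to compute $\frac{d}{dt}E(y(t),z(t))$ pathwise and to show that the cubic ``derivative'' terms coming from $\frac{d}{dt}K$ and $-2\frac{d}{dt}P$ cancel exactly, leaving only an algebraic cubic contribution of the form $\int g(t,\xi)\,z\overline y^2\,d\xi$ with $\|g(t,\cdot)\|_{L^\infty}\le C(\tau^*)$ under (H). Because \eqref{RSNLSS} is a random (not It\^o) PDE, classical calculus applies $\Pas$; the integrations by parts below are justified by approximation using $(y,z)\in C([0,t];H^1)\cap L^2(0,t;W^{1,4})$ for $t<\tau^*$ (Theorem \ref{ymain}), and mass is conserved (Lemma \ref{Com}).

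First I would differentiate $K(y,z)=\|\nabla y\|_{L^2}^2+\tfrac12\|\nabla z\|_{L^2}^2$ and test \eqref{RSNLSS} against $-\Delta y$ and $-\tfrac12\Delta z$. The principal $i\Delta$ parts of $A_1,A_2$ give $\text{Re}(-i\|\Delta\cdot\|_{L^2}^2)=0$. The $b_j\cdot\nabla$ and $c_j$ pieces yield, after one integration by parts and using that $b_1=b_2$ is purely imaginary (so $ib_j\cdot\nabla$ is a real drift), quadratic terms bounded by $C(\tau^*)(\|y\|_{H^1}^2+\|z\|_{H^1}^2)$; mass conservation upgrades this to $C(\tau^*)(1+K)$. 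The remaining genuinely cubic pieces of $\frac{d}{dt}K$ are $-4\,\text{Re}\langle iz\overline y,\Delta y\rangle-\text{Re}\langle iy^2,\Delta z\rangle$, arising from the nonlinearities $2iz\overline y$ and $iy^2$.

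Next I would compute
\[
-2\tfrac{d}{dt}P(y,z)=-2\,\text{Re}\!\int\partial_t z\,\overline y^2\,d\xi-4\,\text{Re}\!\int z\overline y\,\overline{\partial_t y}\,d\xi.
\]
The quartic-in-$(y,z)$ pieces from $iy^2$ and $2iz\overline y$ vanish as real parts of purely imaginary quantities. The contributions from the Laplacian parts $\tfrac{i}{2}\Delta z$ and $i\Delta y$, after one integration by parts, produce exactly $+4\,\text{Re}\langle iz\overline y,\Delta y\rangle+\text{Re}\langle iy^2,\Delta z\rangle$, cancelling the cubic derivative terms from $\frac{d}{dt}K$ -- this is the mechanism behind deterministic energy conservation for \eqref{NLSS}. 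The two $b\cdot\nabla$ contributions, using $b_2=b_1$ (which follows from $\widetilde W=2W$), combine into $-2\,\text{Re}\!\int ib_1\cdot\nabla(z\overline y^2)\,d\xi$, which a further integration by parts rewrites as $2\,\text{Re}\!\int i(\nabla\cdot b_1)\,z\overline y^2\,d\xi$, a purely algebraic cubic term with $L^\infty$-bounded coefficient; the $c_1,c_2$ contributions are already of this form.

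All surviving cubic terms thus read $\int g(t,\xi)\,z\overline y^2\,d\xi$ with $\|g(t,\cdot)\|_{L^\infty}\le C(\tau^*)$, and H\"older's inequality combined with the Sobolev embedding $H^1(\mathbb{R}^4)\hookrightarrow L^4(\mathbb{R}^4)$ (or Lemma \ref{GNI}) gives
\[
\Big|\!\int g\,z\overline y^2\,d\xi\Big|\le C(\tau^*)\|z\|_{L^2}\|y\|_{L^4}^2\le C(\tau^*)\sqrt{M(u_0,v_0)}\,K(y,z),
\]
after one more use of mass conservation. Combining all contributions yields $\tfrac{d}{dt}E(y(t),z(t))\le C(\tau^*)(1+K(y(t),z(t)))$ on $[0,\tau^*)$, and integrating over $[0,t]$ (absorbing $C(\tau^*)t\le C(\tau^*)\tau^*$ into $C(\tau^*)$) proves the claim. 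The main obstacle is the bookkeeping in the third paragraph: verifying that the cubic derivative terms cancel with the correct signs, and that the two $b\cdot\nabla$ contributions reassemble as a total divergence because of $b_1=b_2$, requires careful tracking of complex conjugates against a purely imaginary drift; once that algebra is complete, the remaining analytic estimates are routine.
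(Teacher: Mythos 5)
Your proposal is correct and follows essentially the same route as the paper: differentiate $E$ pathwise, observe that the dangerous cubic terms $(\nabla\overline{y})z\overline{y}$ and $(\nabla\overline{z})y^2$ reassemble (via $b_1=b_2$ and an integration by parts) into the non-derivative cubic $\int g\,z\overline{y}^2\,d\xi$ with bounded $g$, estimate that term by $C(\tau^*)K$ using mass conservation, and bound the remaining quadratic terms by $C(\tau^*)(1+K)$ before integrating in time. The only cosmetic differences are that the paper writes $\tfrac{d}{dt}E=-2\,\mathrm{Re}\langle \Delta y+2z\overline{y},\partial_t y\rangle-2\,\mathrm{Re}\langle \tfrac12\Delta z+y^2,\partial_t z\rangle$ so the Laplacian--nonlinearity cancellation is automatic rather than checked term by term, and it invokes the sharp Gagliardo--Nirenberg inequality of Lemma \ref{GNI} where your critical Sobolev embedding suffices.
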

\begin{proof}
We differentiate the energy with time, we have
\begin{align*}
&\frac{d}{dt}E(y,z) \\
&=\frac{d}{dt}\langle \nabla y,\nabla y \rangle+\frac{1}{2}\frac{d}{dt}\langle \nabla z,\nabla z \rangle-2\frac{d}{dt}\text{Re}\langle z,y^2\rangle \nonumber \\
&=-2\text{Re}\langle \Delta y,\partial_t y\rangle-2\text{Re}\langle \frac{1}{2}\Delta z,\partial_tz \rangle-2\text{Re}\langle y^2,\partial_tz\rangle-2\text{Re}\langle 2z\overline{y},\partial_ty\rangle \\
&=-2\text{Re}\langle \Delta y+2z\overline{y},\partial_t y\rangle-2\text{Re}\langle \frac{1}{2}\Delta z+y^2,\partial_tz \rangle \\
&=-2\text{Im}\langle \Delta y+2z\overline{y},-i\partial_ty\rangle -2\text{Im}\langle \frac{1}{2}\Delta z+y^2,-i\partial_tz\rangle \nonumber \\
&=-2\text{Im}\langle \Delta y+2z\overline{y},b_1\cdot \nabla y+c_1y\rangle-2\text{Im}\langle \frac{1}{2}\Delta z+y^2,b_2\cdot \nabla z+c_2z\rangle \\
&=-2\text{Im}\int \overline{b_1\cdot \nabla y}\Delta y d\xi-2\text{Im}\int \overline{c_1y}\Delta yd\xi-2\text{Im}\int \overline{b_1\cdot \nabla y}(2z\overline{y}) d\xi-2\text{Im}\int \overline{c_1y}(2z\overline{y})d\xi \\
& \quad -2\text{Im}\int \overline{b_2\cdot \nabla z}\left(\frac{1}{2}\Delta z\right) d\xi-2\text{Im}\int \overline{c_2z}\left(\frac{1}{2}\Delta z\right)d\xi-2\text{Im}\int \overline{b_2\cdot \nabla z}y^2 d\xi-2\text{Im}\int \overline{c_2z}y^2d\xi \\
&=-2\text{Im}\int (2\nabla \overline{W}\cdot \nabla \overline{y})\Delta yd\xi-2\text{Im}\int (2\nabla \overline{W}\cdot\nabla \overline{y})(2z\overline{y})d\xi \\
& \quad -2\text{Im}\int\left(\overline{\sum_{j=1}^4(\partial_jW)^2+\Delta W}\right)\overline{y}\Delta yd\xi-2\text{Im}\int\left(\overline{\sum_{j=1}^4(\partial_jW)^2+\Delta W}\right)\overline{y}(2z\overline{y})d\xi \\
& \quad -2\text{Im}\int (\nabla \overline{\widetilde{W}}\cdot \nabla \overline{z})\left(\frac{1}{2}\Delta z\right)d\xi-2\text{Im}\int (\nabla \overline{\widetilde{W}}\cdot\nabla \overline{z})y^2d\xi \\
& \quad -2\text{Im}\int\left(\overline{\frac{1}{2}\sum_{j=1}^4(\partial_j\widetilde{W})^2+\frac{1}{2}\Delta \widetilde{W}}\right)\overline{z}\left(\frac{1}{2}\Delta z\right)d\xi-2\text{Im}\int\left(\overline{\frac{1}{2}\sum_{j=1}^4(\partial_j\widetilde{W})^2+\frac{1}{2}\Delta \widetilde{W}}\right)\overline{z}y^2d\xi \\
&=:\sum_{j=1}^8I_j.
\end{align*}
For the integral terms of $(\nabla \overline{y})z\overline{y}$, and $(\nabla \overline{z})y^2$ appearing in the second and sixth terms of the final formula, there is not enough regularity to estimate those in the $L^2$ norm.
However, the terms of the cubic polynomial which includes $\nabla$, which is an obstacle to the proof, can be canceled by appropriate calculations.

Indeed, first, we compute from $I_1$ to $I_4$ and we obtain
\begin{align*}
I_1&=-2\text{Im}\int (2\nabla \overline{W}\cdot \nabla \overline{y})\Delta yd\xi=4\sum_{k=1}^NB_k\text{Re}\int \nabla\phi_k\cdot \nabla \overline{y}\Delta yd\xi, \\
I_2&=-2\text{Im}\int (2\nabla \overline{W}\cdot\nabla \overline{y})(2z\overline{y})d\xi=-4\sum_{k=1}^N\text{Im}\int (\nabla(\overline{i\phi_kB_k})\cdot \nabla \overline{y})(2z\overline{y})d\xi \nonumber \\
&=4\sum_{k=1}^NB_k\text{Re}\int(\nabla \phi_k\cdot\nabla \overline{y})(2z\overline{y})d\xi, \\
I_3&=-2\text{Im}\int\left(\overline{\sum_{j=1}^4(\partial_jW)^2+\Delta W}\right)\overline{y}\Delta yd\xi \nonumber \\
&=-2\sum_{j=1}^4\text{Im}\int (\partial_jW)^2\overline{y}\Delta yd\xi-2\text{Im}\int (\Delta \overline{W})\overline{y}\Delta yd\xi \nonumber \\
&=2\sum_{j=1}^4\text{Im}\int \left( \sum_{k=1}^N\partial_j\phi_kB_k \right)^2 \overline{y}\Delta y d\xi+2\sum_{k=1}^NB_k\text{Re}\int \Delta \phi_k\overline{y} \Delta y d\xi \nonumber \\
&=-2\sum_{j=1}^4\text{Im}\int \nabla \left( \sum_{k=1}^N\partial_j\phi_kB_k \right)^2\cdot \nabla y\overline{y}d\xi+2\sum_{k=1}^NB_k\text{Re}\int\Delta \phi_k\overline{y}\Delta yd\xi, \\
I_4&=-2\text{Im}\int\left(\overline{\sum_{j=1}^4(\partial_jW)^2+\Delta W}\right)\overline{y}(2z\overline{y})d\xi \nonumber \\
&=-2\sum_{j=1}^4\text{Im}\int (\partial_jW)^2\overline{y}(2z\overline{y})d\xi-2\text{Im}\int \Delta \overline{W}\overline{y}(2z\overline{y})d\xi \nonumber \\
&=2\sum_{j=1}^4\text{Im}\int \left( \sum_{k=1}^N\partial_j\phi_kB_k \right)^2\overline{y}(2z\overline{y})d\xi+2\sum_{k=1}^NB_k\text{Re}\int\Delta \phi_k \overline{y}(2z\overline{y})d\xi.
\end{align*}
Therefore, we have
\begin{align}
I_1+I_3&=4\sum_{k=1}^NB_k\text{Re}\int \nabla\phi_k\cdot \nabla \overline{y}\Delta yd\xi \nonumber \\
&\quad -2\sum_{j=1}^4\text{Im}\int \nabla \left( \sum_{k=1}^N\partial_j\phi_kB_k \right)^2\cdot \nabla y\overline{y}d\xi+2\sum_{k=1}^NB_k\text{Re}\int\Delta \phi_k\overline{y}\Delta yd\xi \nonumber \\
&=\sum_{k=1}^NB_k\int\Delta^2 \phi_k|y|^2d\xi-4\sum_{k=1}^NB_k\text{Re}\sum_{i=1}^4\sum_{j=1}^4\int \partial_i\partial_j\phi_k\partial_iy\partial_j\overline{y}d\xi \nonumber \\
&\quad -2\sum_{j=1}^4\text{Im}\int \nabla \left( \sum_{k=1}^N\partial_j\phi_kB_k \right)^2\cdot \nabla y\overline{y}d\xi, \\
I_2+I_4&=4\sum_{k=1}^NB_k\text{Re}\int(\nabla \phi_k\cdot\nabla \overline{y})(2z\overline{y})d\xi \nonumber \\
& \quad +2\sum_{j=1}^4\text{Im}\int \left( \sum_{k=1}^N\partial_j\phi_kB_k \right)^2\overline{y}(2z\overline{y})d\xi+2\sum_{k=1}^NB_k\text{Re}\int\Delta \phi_k \overline{y}(2z\overline{y})d\xi.
\end{align}
Similarly, we compute from $I_5$ to $I_8$ and we obtain
\begin{align}
I_5+I_7&=\frac{1}{2}\sum_{k=1}^NB_k\int\Delta^2 \phi_k|z|^2d\xi-2\sum_{k=1}^NB_k\text{Re}\sum_{i=1}^4\sum_{j=1}^4\int \partial_i\partial_j \phi_k \partial_iz\partial_j\overline{z} d\xi \nonumber \\
&\quad -\frac{1}{2}\sum_{j=1}^4\text{Im}\int \nabla \left( \sum_{k=1}^N\partial_j(2\phi_k)B_k \right)^2\cdot \nabla z\overline{z}d\xi, \\
I_6+I_8&=4\sum_{k=1}^NB_k\text{Re}\int(\nabla \phi_k\cdot\nabla \overline{z})y^2d\xi+\sum_{j=1}^4\text{Im}\int \left( \sum_{k=1}^N\partial_j(2\phi_k)B_k \right)^2\overline{z}y^2d\xi \nonumber \\
& \quad +2\sum_{k=1}^NB_k\text{Re}\int\Delta \phi_k \overline{z}y^2d\xi.
\end{align}
Thus, we get
\begin{align}
\label{ene}
\frac{d}{dt}E(y,z)&=\sum_{k=1}^NB_k\int \Delta^2\phi_k|y|^2d\xi+\frac{1}{2}\sum_{k=1}^NB_k\int \Delta^2\phi_k|z|^2d\xi \nonumber \\
&-4\sum_{k=1}^NB_k\text{Re}\sum_{i=1}^4\sum_{j=1}^4\int \partial_i\partial_j\phi_k\partial_iy\partial_j\overline{y}d\xi-2\sum_{k=1}^NB_k\text{Re}\sum_{i=1}^4\sum_{j=1}^4\int \partial_i\partial_j \phi_k \partial_iz\partial_j\overline{z} d\xi \nonumber \\
&-2\sum_{j=1}^4\text{Im}\int\nabla\left(\sum_{k=1}^N\partial_j\phi_kB_k\right)^2\cdot\nabla y\overline{y}d\xi-\frac{1}{2}\sum_{j=1}^4\text{Im}\int\nabla\left(\sum_{k=1}^N\partial_j(2\phi_k)B_k\right)^2\cdot\nabla z\overline{z}d\xi \nonumber \\
&+2\sum_{k=1}^NB_k\text{Re}\int \Delta \phi_kz\overline{y}^2d\xi.
\end{align}
Then, for any $t'\in (0,\tau^*)$, since $\phi_k\in C^{\infty}_b$ and $B_k\in C([0,t']), \Pas, \  1\le k\le N$, using H\"older's inequality and Young's inequality, we obtain that $\Pas$ for any $t\in[0,t']$,
\begin{align}
\label{Eyz}
&E(y(t),z(t)) \nonumber \\
\le &E(u_0,v_0)+C(\tau^*)\int_0^t(\|y(s)\|^2_{L^2}+2\|z(s)\|^2_{L^2}+\|\nabla y(s)\|^2_{L^2}+\|\nabla z(s)\|^2_{L^2})ds \nonumber \\
&+C(\tau^*)\int_0^t P(y,z) ds.
\end{align}
From Lemma \ref{GNI} and Lemma \ref{Com}, we have
\begin{align}
\label{inter}
P(y(s),z(s))\le \frac{1}{2}\sqrt{\frac{M(y(s),z(s))}{M(\phi,\psi)}}K(y(s),z(s))=\frac{1}{2}\sqrt{\frac{M(u_0,v_0)}{M(\phi,\psi)}}K(y(s),z(s)).
\end{align}
In the last equality, we absorb one order of power, by Lemma \ref{Com}, so the third-order polynomial on the left-hand side is estimated by the second-order polynomial on the right-hand side.
We remark here that the same estimate in \eqref{inter} doesn't work for the terms $(\nabla \overline{y})z\overline{y}$ and $(\nabla \overline{z})y^2$.

Therefore, from \eqref{Eyz}, Lemma \ref{Com} and $t\le \tau^*$, we obtain
\begin{align}
\label{E1}
E(y(t),z(t))&\le E(u_0,v_0)+C(\tau^*)\int_0^t(M(y(s),z(s))+K(y(s),z(s)))ds \nonumber \\
&=E(u_0,v_0)+C(\tau^*)M(u_0,v_0)t+C(\tau^*)\int_0^tK(y(s),z(s))ds \nonumber \\
&\le (E(u_0,v_0)+C(\tau^*))+C(\tau^*)\int_0^tK(y(s),z(s))ds.
\end{align}
\end{proof}
We are ready to prove Theorem \ref{yzglo4}.
\begin{proof}[Proof of Theorem \ref{yzglo4}]
In Theorem \ref{ymain}, we have that $\Pas$ there exists a unique solution $(y,z)$ to equation (\ref{RSNLSS}) on $[0,\tau^*)$, where $\tau^*\in(0,T]$ is some positive random variable. Hence, to show the existence of a global solution, we only need to prove that $\tau^*=T, \ \Pas$. 

By Lemma \ref{GNI} and Lemma \ref{Com}, we obtain
\begin{align}
\label{E2}
E(y(t),z(t))&=K(y(t),z(t))-2P(y(t),z(t)) \nonumber \\ 
&\ge K(y(t),z(t))+2\left( -\frac{1}{2}\sqrt{\frac{M(u_0,v_0)}{M(\phi,\psi)}} \right)K(y(t),z(t)) \nonumber \\ 
&=\left( 1-\sqrt{\frac{M(u_0,v_0)}{M(\phi,\psi)}}\right) K(y(t),z(t)).
\end{align}
Therefore, from $M(u_0,v_0)<M(\phi,\psi)$, if we summarize (\ref{E1}) and (\ref{E2}), we have
\[ K(y(t),z(t))\le (CE(u_0,v_0)+C(\tau^*))+C'(\tau^*)\int_0^tK(y(s),z(s))ds. \]
Using Gronwall's inequality, we obtain $\Pas$
\begin{align}
\sup_{0\le t\le t'}(\|\nabla y(t)\|^2_{L^2}+\|\nabla z(t)\|^2_{L^2})\le C(\tau^*)<\infty,
\end{align}
which yields the boundedness of 
\[ \sup_{0\le t<\tau^*}(\|\nabla y(t)\|^2_{L^2}+\|\nabla z(t)\|^2_{L^2})<\infty, \]
by letting $t'\to \tau^*$. Therefore, using the blow-up alternative result, we obtain $\tau^*=T, \ \Pas$.
\end{proof}

\section*{Acknowledgement}
This work was supported by JSPS KAKENHI No. JP22J00787 (for M.H.) and No. JP19H00644 (for S.M.).
\section*{Declarations}
\subsection*{Conflicts of interests}
The authors declare that there is no conflict of interest regarding the publication of this paper.
\subsection*{Data Availability Statements}
Data sharing is not applicable to this article as no datasets were generated or analyzed during the current study.

\end{document}